\definecolor{darkblue}{RGB}{42,31,109}
\definecolor{darkgreen}{RGB}{0,100,0}
\definecolor{darkgray}{RGB}{100,110,120}
\DeclareMathOperator{\rev}{\mathrm{rev}}
\newcommand{\word}{w}
\newcommand{\A}{\texttt{a}}
\newcommand{\D}{\texttt{d}}
\theoremstyle{plain}
\newtheorem{theorem}{Theorem}[section]
\newtheorem{lemma}[theorem]{Lemma}
\newtheorem{proposition}[theorem]{Proposition}
\newtheorem*{proposition*}{Proposition}
\newtheorem*{conjecture*}{Conjecture}
\theoremstyle{definition}
\newtheorem{definition}[theorem]{Definition}
\theoremstyle{remark}
\newtheorem*{remark*}{Remark}
\def\customwidth{0.6pt}
\def\customscale{0.61}
\def\customwidthsmall{0.6pt}
\def\customscalesmall{0.5}
\newcommand{\comparray}[2]{
  \begin{scope}[shift={#1}]
    \foreach \row [count=\y from 0] in {#2} {
      \pgfmathparse{dim(\row)};
      \pgfmathsetmacro{\w}{\pgfmathresult};
      \foreach \b [count=\x from 0] in \row {
        \ifthenelse{\b=1}
          {\draw[line cap=round] (\x,-\y) -- (\x,1-\y);}{};
        \ifthenelse{\b=2}
          {\draw[loosely dotted,line cap=round] (\x,-\y) -- (\x,1-\y);}{};
        \ifthenelse{\b=3}
          {\draw[red,line cap=round] (\x,-\y) -- (\x,1-\y);}{};
      }
      \draw[line cap=round] (0,1-\y-1) -- (\w-1,-\y);
      \ifthenelse{\y=0}
        {\draw[line cap=round] (0,1-\y) -- (\w-1,1-\y);}
        {};
  }
  \end{scope}
}
\newcommand{\permutation}[3]{
  \begin{scope}[shift={#1}]
    \foreach \v [count=\x from 0] in {#3} {
      \node[text height=1.5ex,text depth=.25ex] at (\x+0.5,-#2+0.5) {$\v$};
      % \pgfmathparse{dim(\row)};
      % \pgfmathsetmacro{\w}{\pgfmathresult};
      % \foreach \b [count=\x from 0] in \row {
      %   \ifthenelse{\b=1}
      %     {\draw[line cap=round] (\x,-\y) -- (\x,1-\y);}
      %     {\draw[line width=1.15,loosely dotted,line cap=round] (\x,-\y) -- (\x,1-\y);};
      % }
      % \draw[line cap=round] (0,1-\y-1) -- (\w-1,-\y);
      % \ifthenelse{\y=0}
      %   {\draw[line cap=round] (0,1-\y) -- (\w-1,1-\y);}
      %   {};
  }
  \end{scope}
}
\title{Enumerating permutations sortable by \\$k$ passes through a pop-stack}
\author{Anders Claesson\corref{cor1}\fnref{fn1}}
\author{Bjarki Ágúst Guðmundsson\fnref{fn1}}
\address{Science Institute, University of Iceland, Dunhaga 5, IS-107 Reykjavik, Iceland}
\begin{document}

\begin{abstract}
  In an exercise in the first volume of his famous series of books,
  Knuth considered sorting permutations by passing them through a stack.
  Many variations of this exercise have since been considered, including
  allowing multiple passes through the stack and using different data
  structures. We are concerned with a variation using pop-stacks that was
  introduced by Avis and Newborn in 1981. Let $P_k(x)$ be the generating
  function for the permutations sortable by $k$ passes through a
  pop-stack.  The generating function $P_2(x)$ was recently given by
  Pudwell and Smith (the case $k=1$ being trivial). We show that
  $P_k(x)$ is rational for any $k$. Moreover, we give an algorithm to
  derive $P_k(x)$, and using it we determine the generating functions
  $P_k(x)$ for $k\leq 6$.
\end{abstract}
\begin{keyword}
  enumeration\sep generating function\sep pop-stack\sep permutation\sep rational\sep automaton
\end{keyword}

\maketitle

\thispagestyle{empty}

\section{Introduction}

Knuth~\cite[Exercise~2.2.1.5]{knuth1968art} noted that permutations
sortable by a stack are precisely those that do not contain a
subsequence in the same relative order as the permutation $231$.  This
exercise inspired a wide range of research and can be seen as the
starting point of the research field we now call permutation patterns.
Our interest lies in Knuth's original exercise and its variations.

In 1972 Tarjan~\cite{tarjan1972sorting} considered sorting with networks
of stacks and queues, a problem that, in general, has proven itself to
be beyond the reach of current methods in permutation patterns and
enumeration.  There has been some recent significant progress though: In
2015 Albert and Bousquet-M{\'e}lou~\cite{albert2015permutations}
enumerated permutations sortable by two stacks in parallel. Sorting with
two stacks in series is, however, an open problem. A related problem is
that of sorting permutations by $k$ passes through a stack, where the
elements on the stack are required to be increasing when read from top
to bottom. West~\cite{west1990permutations} characterized the
permutations sortable by two passes through a stack in terms of pattern
avoidance and conjectured their enumeration, a conjecture that was
subsequently proved by
Zeilberger~\cite{zeilberger1992proof}. Permutations sortable by three
passes have been characterized by
\'Ulfarsson~\cite{ulfarsson2012describing}, but their enumeration is
unknown.

In other variations of Knuth's exercise different data structures are
used for sorting. One notable example is that of pop-stacks: a stack
where each pop operation completely empties the stack. Avis and
Newborn~\cite{avis1981pop} enumerated the permutations sortable by
pop-stacks in series, with the modification that each pop leads to the
popped elements being immediately pushed to the following
pop-stack. Atkinson and Stitt~\cite{atkinson2002restricted} considered
two pop-stacks in genuine series. Permutations sortable by pop-stacks in
parallel have been studied by Atkinson and Sack~\cite{atkinson1999pop},
who characterized those permutations by a finite set of forbidden
patterns. They also conjectured that the generating function for their
enumeration is rational, which was subsequently proved by Smith and
Vatter~\cite{smith2009enumeration}, who gave an insertion
encoding~\cite{albert2005insertion} for the sortable permutations.

Pudwell and Smith~\cite{pudwell2017} recently
characterized the permutations sortable by two passes through a pop-stack in
terms of pattern avoidance and gave their enumeration. They also gave a
bijection between certain families of polyominoes and the permutations sortable
by one or two passes through a pop-stack, but noted that the bijection does not
generalize to three passes.
In this paper we consider, more generally, the permutations sortable by
$k$ passes through a pop-stack. In particular, we give an algorithm to
derive a rational generating function for the permutations sortable
by $k$ passes through a pop-stack, for any fixed $k$.

\section{Sorting plans and traces}

A single pass of the pop-stack sorting operator formally works as
follows. Processing a permutation $\pi=a_1a_2\dots a_n$ of
$[n]=\{1,\dots,n\}$ from left to right, if the stack is empty or its top
element is smaller than the current element $a_i$ then perform a single
pop operation ($\A$), emptying the stack and appending those elements to
the output permutation; else do nothing ($\D$). Next, push $a_i$ onto the
stack and proceed with element $a_{i+1}$, or if $i=n$ perform one final
pop operation ($\A$), again emptying the stack onto the output
permutation, and terminate. Define $P(\pi)$ as the final output
permutation and $\word(\pi)$ as the word over the alphabet $\{\A,\D\}$
defined by the operations performed when processing $\pi$.  For
instance, with $\pi=752491863$ we have $P(\pi)=257419368$ and
$\word(\pi)=\A\D\D\A\A\D\A\D\D\A$. Note that $\word(\pi)$ will always
begin and end with the letter $\A$. We will call any word of length $n+1$
with letters in $\{\A,\D\}$ that begin and end with the letter $\A$ an
\emph{operation sequence}.

Let us now introduce what we call sorting traces.  Consider applying the
pop-stack operator $P$ to a permutation $\pi$ of $[n]$. Start by interleaving $w(\pi)$
with $\pi$; for instance, with $\pi=752491863$ (as before) we have
$\A 7\D 5\D 2\A 4\A 9\D 1\A 8\D 6\D 3 \A$.  Replacing $\A$ with a bar
and $\D$ with a space, and placing $P(\pi)$ below this string, we have
$$
\begin{tikzpicture}[line width=\customwidth, scale=\customscale]
  \useasboundingbox (0,-0.5) rectangle (9,1);
  \comparray{(0.0,0)}{{1,0,0,1,1,0,1,0,0,1}};
  \permutation{(0.0,0)}{0}{7,5,2,4,9,1,8,6,3};
  \permutation{(0.0,0)}{1}{2,5,7,4,1,9,3,6,8};
\end{tikzpicture}
$$
We call the numbers between pairs of successive \A's \emph{blocks}. Above,
the blocks of $\pi$ are $752$, $4$, $91$, and $863$.
Note that $P(\pi)$ can be obtained from $\pi$ by reversing each block.
An index $i\in[n-1]$ of a permutation $\pi=a_1a_2\dots a_n$ is an
\emph{ascent} if $a_i<a_{i+1}$.
Similarly, $i$ is a \emph{descent} if $a_i>a_{i+1}$. With this terminology
 $\word(\pi) = c_1c_2\dots c_{n+1}$ is simply the ascent/descent word of
$-\infty\hspace{1pt}\pi\hspace{1pt}\infty$; i.e.\ $c_1=c_{n+1}=\A$ and, for $2\leq i\leq n$,
$$c_i =
\begin{cases}
  \A &\text{if $i-1$ is an ascent,} \\
  \D &\text{if $i-1$ is a descent.}
\end{cases}
$$

A figure as the one above can be
extended to depict multiple passes through a pop-stack. Applying
the pop-stack operator to the example permutation, $\pi$, until it is sorted gives:
$$
\begin{tikzpicture}[line width=\customwidth, scale=\customscale]
  \useasboundingbox (0,-3.5) rectangle (9,1);
  \comparray{(0.0,0)}{{1,0,0,1,1,0,1,0,0,1},
    {1,1,1,0,0,1,0,1,1,1},
    {1,1,0,1,1,0,1,0,1,1},
    {1,0,1,0,0,1,0,1,0,1}};
  \permutation{(0.0,0)}{0}{7,5,2,4,9,1,8,6,3};
  \permutation{(0.0,0)}{1}{2,5,7,4,1,9,3,6,8};
  \permutation{(0.0,0)}{2}{2,5,1,4,7,3,9,6,8};
  \permutation{(0.0,0)}{3}{2,1,5,4,3,7,6,9,8};
  \permutation{(0.0,0)}{4}{1,2,3,4,5,6,7,8,9};
\end{tikzpicture}
$$
We will call such figures sorting traces, or traces for short.
The structure that remains when removing the numbers from a trace we
call its sorting plan.  Each row of a sorting plan corresponds to an operation
sequence, and for convenience we shall number the rows $1$ through $k$,
from top to bottom. The example sorting plan can be viewed as the following
array of operation sequences:
\[
\begin{array}{c}
  \A\,\D\,\D\,\A\,\A\,\D\,\A\,\D\,\D\,\A \\
  \A\,\A\,\A\,\D\,\D\,\A\,\D\,\A\,\A\,\A \\
  \A\,\A\,\D\,\A\,\A\,\D\,\A\,\D\,\A\,\A \\
  \A\,\D\,\A\,\D\,\D\,\A\,\D\,\A\,\D\,\A
\end{array}
\]
By interpreting each column as a binary number with $a=0$ and $d=1$ the
sorting plan can more compactly be represented, or \emph{encoded}, with
the sequence of numbers
\begin{center}
0, 9, 10, 5, 5, 10, 5, 10, 9, 0.
\end{center}

Formally, we define a trace and its sorting plan as follows.

\begin{definition}
    \label{def:trace}
    Let $A = (\alpha_1,\alpha_2,\ldots,\alpha_k,\alpha_{k+1})$ be a
    $(k+1)$-tuple of permutations of $[n]$ in which $\alpha_{k+1}$ is the
    identity permutation. Let
    $M = (\mu_1,\mu_2,\ldots,\mu_k)$ be a $k$-tuple of
    operation sequences, each of length $n+1$.
    We call $T=(A,M)$ a \emph{trace} of \emph{length} $n$ and
    \emph{order} $k$, and $M$ its \emph{sorting plan}, if the following
    conditions are satisfied for $i=1,\ldots,k$:
    \begin{enumerate}
    \item\label{traceprop1} The word $\mu_i$ records the sequence of
      operations performed by the pop-stack operator when applied to
      $\alpha_i$---in symbols, $\word(\alpha_i)=\mu_i$. As to the
      picture of the trace, two adjacent numbers form an ascent if and
      only if they are separated by a bar.
    \item\label{traceprop2} The sequence of permutations in $A$ records
      repeated application of the pop-stack operator to the permutation
      $\alpha_1$---in symbols, $P(\alpha_i) = \alpha_{i+1}$.  Or,
      assuming the first condition is satisfied,
      $\rev(\alpha_i,\mu_i) = \alpha_{i+1}$, where the operator $\rev$
      is defined below.
    \end{enumerate}
\end{definition}

\begin{definition}
  \label{def:rev}
  Let $\pi$ be a permutation of $[n]$ and $\mu=c_1c_2\dots c_{n+1}$ be an
  operation sequence. Let
  $i_1< i_2< \dots< i_k$ be the sequence of indices $i$ for which
  $c_i=\A$.  Write $\pi=\gamma_1\gamma_2\dots\gamma_{k-1}$, where
  $|\gamma_j|=i_{j+1}-i_{j}$ for $j=1,\dots,k-1$. In other words, the
  length of $\gamma_j$ is the same as the length of the $j$th block of
  $\pi$ with respect to $\mu$.  Define
  $\rev(\pi,\mu) = \gamma_1^r\gamma_2^r\dots\gamma_{k-1}^r$, where $(\,\cdot\,)^r$ is the reversal operator.
  We call this the \emph{blockwise reversal of $\pi$ according to $\mu$}.
\end{definition}

Because of the strict constraints on the operation sequences
and the permutations, much of the information stored in a trace is redundant.
As we have seen, the first permutation $\alpha_1$ alone determines the complete
trace. Similarly, if we have the sorting plan $M$, then
we can recover the permutations $\alpha_1$, \dots, $\alpha_{k+1}$. This is
possible because the last permutation $\alpha_{k+1}$ is the identity,
the permutation $\alpha_k$ is the blockwise reversal of $\alpha_{k+1}$ according to $\mu_k$,
the permutation $\alpha_{k-1}$ is the blockwise reversal of $\alpha_{k}$ according to $\mu_{k-1}$, etc.
In symbols, $\rev(\alpha_i,\mu_i) = \alpha_{i+1}$ if and only if
$\alpha_i = \rev(\alpha_{i+1},\mu_i)$. In this way a sorting plan uniquely
determines a trace.
% and we may use the terms trace and sorting plan interchangeably.

Our goal is to count how many permutations of $[n]$ are sortable by $k$
passes through a pop-stack. Note that a permutation sortable by $k$
passes is also sortable by $k+1$ passes. For brevity we will sometimes
refer to a permutation sortable by $k$ passes through a pop-stack as
\emph{$k$-pop-stack-sortable}. Starting with a $k$-pop-stack-sortable
permutation of $[n]$ and performing $k$ passes of the pop-stack sorting
operator results in a trace of length $n$ and order $k$. Conversely, the
first row of that trace is the $k$-pop-stack-sortable permutation we
started with. Thus, $k$-pop-stack-sortable permutations of $[n]$ are in
one-to-one correspondence with traces of length $n$ and order $k$. Those
are, in turn, in one-to-one correspondence with their sorting plans, and
hence it suffices to count sorting plans of length $n$ and order $k$.

Let us call any $k$-tuple of operation sequences of length $n+1$ an
\emph{operation array} of length $n+1$ and order $k$. Note that
sorting plans are operation arrays, but not all operation arrays are sorting plans.
Let an operation array $(\mu_1,\mu_2,\ldots,\mu_k)$ be given. Let the
permutations $\alpha_1$, $\alpha_2$, \ldots, $\alpha_{k+1}$ be defined
by requiring that the permutation $\alpha_{k+1}$ is the identity,
the permutation $\alpha_k$ is the blockwise reversal of $\alpha_{k+1}$ according to $\mu_k$,
the permutation $\alpha_{k-1}$ is the blockwise reversal of $\alpha_{k}$ according to $\mu_{k-1}$,
etc. In symbols, $\rev(\alpha_i,\mu_i) = \alpha_{i+1}$. Then
the tuple $(\alpha_1,\ldots,\alpha_{k+1})$ is called a
\emph{semitrace}. In other words, a semitrace is defined by requiring
that Property~\ref{traceprop2} of Definition~\ref{def:trace} holds, but
not necessarily Property~\ref{traceprop1} of the same definition.

We shall characterize those operation arrays
that are sorting plans, then count the sorting plans of order $k$, and by
extension the $k$-pop-stack-sortable permutations. We start by
considering cases where $k$ is small.

\subsection{1-pop-stack-sortable permutations}

Let us count permutations of $[n]$ sortable by one pass through a
pop-stack, or, equivalently, sorting plans of length $n$ and order $1$. We
want to determine which operation sequences of length $n+1$ when
interpreted as operation arrays are sorting plans.
Consider the operation sequence $\A\D\D\A\A\D\A\D\A$ as an
example. The semitrace corresponding to it is
$$
\begin{tikzpicture}[line width=\customwidth, scale=\customscale]
  \useasboundingbox (0,-0.5) rectangle (8,1);
  \comparray{(0.0,0)}{{1,0,0,1,1,0,1,0,1}};
  \permutation{(0.0,0)}{0}{3,2,1,4,6,5,8,7};
  \permutation{(0.0,0)}{1}{1,2,3,4,5,6,7,8};
\end{tikzpicture}
$$
Is this a trace? By definition of a semitrace it satisfies
Property~\ref{traceprop2} of Definition~\ref{def:trace}, but does it
also satisfy Property~\ref{traceprop1}? Yes, because the bottom-most
permutation is the identity we find that any two adjacent numbers
separated by a bar form an ascent, and two adjacent numbers within a
block (i.e.\ adjacent numbers that are not separated by a bar) form a
descent.
Thus, each operation sequence represents a sorting plan of order 1, and
hence a 1-pop-stack-sortable permutation.  An operation sequence starts
and ends with the letter $\A$. The remaining letters can be either $\A$
or $\D$. Thus, for $n>0$, there are $2^{n-1}$ operation sequences of
length $n+1$, and hence $2^{n-1}$ 1-pop-stack-sortable permutations of
$[n]$. Based on how they are derived from the operation sequences it is
easy to see that these are precisely the layered permutations
(direct sums of decreasing permutations).

While this simple example outlines our approach to count
$k$-pop-stack-sortable permutations, it is a little too simple. For
larger $k$, most operation arrays will not be sorting plans. Before
attacking the problem in full generality, let us consider the case $k=2$
and see how to restrict the operation arrays to those that
represent sorting plans.

\subsection{2-pop-stack-sortable permutations}

Pudwell and Smith \cite{pudwell2017} enumerated 2-pop-stack-sortable
permutations. Let us reproduce their results by classifying sorting
plans of order $2$.  Given an arbitrary operation array of order 2,
i.e.\ a pair of operation sequences of length $n+1$, we will start with
the identity permutation at the bottom, fill in the remaining two
permutations, and then determine if the resulting semitrace is a trace.  As
we saw in the previous section, the second operation sequence will
always satisfy Property~\ref{traceprop1}. To determine when the first
operation sequence satisfies Property~\ref{traceprop1} let us do case
analysis based on the size of an arbitrary block in the second row of
our trace.

If the block is of size $1$, then we have a single integer
$a\in[n]$ and the neighborhood around the block looks as follows:
$$
\begin{tikzpicture}[line width=\customwidth, scale=\customscale]
  \useasboundingbox (0,-1.5) rectangle (1,1);
  \comparray{(0.0,0)}{{2,2},{1,1}};
  % \permutation{(0.0,0)}{0}{,};
  \permutation{(0.0,0)}{1}{a};
  \permutation{(0.0,0)}{2}{a};
\end{tikzpicture}
$$
Here, a dotted line can either be solid or not, representing the
presence or absence of a bar, respectively.

Since at least two numbers are needed to break
Property~\ref{traceprop1} all four configurations of the two
dotted lines are possible:
\[
  \begin{tikzpicture}[line width=\customwidth, scale=\customscale]
    \useasboundingbox (0,-1+0.2) rectangle (8,1);
    \comparray{(6.5,0)}{{1,1},{1,1}};
    \comparray{(4.5,0)}{{0,1},{1,1}};
    \comparray{(2.5,0)}{{1,0},{1,1}};
    \comparray{(0.5,0)}{{0,0},{1,1}};
  \end{tikzpicture}
\]

If the block is of size $2$, then we have two integers
$a,b\in[n]$, with $b=a+1$, and the neighborhood around the block looks
as follows:
$$
\begin{tikzpicture}[line width=\customwidth, scale=\customscale]
  \useasboundingbox (0,-1.5) rectangle (2,1);
  \comparray{(0.0,0)}{{2,2,2},{1,0,1}};
  % \permutation{(0.0,0)}{0}{,};
  \permutation{(0.0,0)}{1}{b,a};
  \permutation{(0.0,0)}{2}{a,b};
\end{tikzpicture}
$$
Consider the dotted line in the middle and assume for a moment
that it is not solid. Then $a$ and $b$
will appear together in the block above, with $a$ appearing before $b$,
and this means that the numbers within this block are not in decreasing
order, a contradiction. Hence the dotted line in the middle must be
solid.
% $$
% \begin{tikzpicture}[line width=\customwidth, scale=\customscale]
%   \useasboundingbox (0,-1.5) rectangle (2,1);
%   \comparray{(0.0,0)}{{2,1,2},{1,0,1}};
%   % \permutation{(0.0,0)}{0}{,};
%   \permutation{(0.0,0)}{1}{b,a};
%   \permutation{(0.0,0)}{2}{a,b};
% \end{tikzpicture}
% $$
Now, assume that the remaining two dotted lines are solid
too. Then the placement of $a$ and $b$ in the first
row is determined:
$$
\begin{tikzpicture}[line width=\customwidth, scale=\customscale]
  \useasboundingbox (0,-1.5) rectangle (2,1);
  \comparray{(0.0,0)}{{1,1,1},{1,0,1}};
  \permutation{(0.0,0)}{0}{b,a};
  \permutation{(0.0,0)}{1}{b,a};
  \permutation{(0.0,0)}{2}{a,b};
\end{tikzpicture}
$$
But here we reach a contradiction: in the first
row, $a$ and $b$ form a descent but are separated by a bar.
Thus, at most one of the two dotted lines
on the boundary can be solid. This leaves three possibilities, and we
will not be able to reduce their number any further:
\[
  \begin{tikzpicture}[line width=\customwidth, scale=\customscale]
    \useasboundingbox (0,-4+0.2) rectangle (8,-2);
    \comparray{(0,-3)}{{1,1,0},{1,0,1}};
    \comparray{(3,-3)}{{0,1,1},{1,0,1}};
    \comparray{(6,-3)}{{0,1,0},{1,0,1}};
    \end{tikzpicture}
  \]

If the block is of size $3$, then we have three integers
$a,b,c\in[n]$, with $b=a+1$ and $c=b+1$, and the neighborhood around
the block looks as follows:
$$
\begin{tikzpicture}[line width=\customwidth, scale=\customscale]
  \useasboundingbox (0,-1.5) rectangle (3,1);
  \comparray{(0.0,0)}{{2,2,2,2},{1,0,0,1}};
  % \permutation{(0.0,0)}{0}{,,};
  \permutation{(0.0,0)}{1}{c,b,a};
  \permutation{(0.0,0)}{2}{a,b,c};
\end{tikzpicture}
$$
If either $a$ and $b$, or $b$ and $c$ were together in a block in the
first row, then the numbers in that block would not be in decreasing
order, a contradiction. The two centermost dotted lines
must thus be solid:
$$
\begin{tikzpicture}[line width=\customwidth, scale=\customscale]
  \useasboundingbox (0,-1.5) rectangle (3,1);
  \comparray{(0.0,0)}{{2,1,1,2},{1,0,0,1}};
  \permutation{(0.0,0)}{0}{,b,};
  \permutation{(0.0,0)}{1}{c,b,a};
  \permutation{(0.0,0)}{2}{a,b,c};
\end{tikzpicture}
$$
If either of the two remaining dotted lines were solid, then either $a$ and
$b$, or $b$ and $c$ would form an ascent and be separated by a bar, a
contradiction. Hence neither of the two remaining dotted lines can be solid,
leaving only one possibility:
\[
  \begin{tikzpicture}[line width=\customwidth, scale=\customscale]
    \useasboundingbox (2.5,-7+0.2) rectangle (5.5,-5);
    \comparray{(2.5,-6)}{{0,1,1,0},{1,0,0,1}};
  \end{tikzpicture}
\]

For blocks of size $4$ or greater, we present the following lemma.
\begin{lemma}
    \label{lem:blocks3}
  In a trace of order $2$ or greater, each operation sequence---except for
  the first one---contains at most $2$ consecutive $\D$'s. Or, equivalently,
  each row of the sorting plan---except for the first one---has blocks of size at
  most $3$.
\end{lemma}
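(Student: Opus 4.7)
The plan is to argue by contradiction on a single row $i\geq 2$ of the sorting plan, using that the previous row $i-1$ exists and satisfies both properties of Definition~\ref{def:trace}. The key observation is that Property~\ref{traceprop1} applied to $\alpha_{i-1}$ says the blocks of $\alpha_{i-1}$ with respect to $\mu_{i-1}$ are exactly the maximal descending runs of $\alpha_{i-1}$. Consequently, by Property~\ref{traceprop2}, the permutation $\alpha_i = \rev(\alpha_{i-1},\mu_{i-1})$ decomposes as a concatenation $B_1^r B_2^r\cdots B_m^r$ of strictly \emph{increasing} words, each obtained by reversing a maximal descending run of $\alpha_{i-1}$.

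Suppose for contradiction that $\mu_i$ contains three consecutive $\D$'s; equivalently, by Property~\ref{traceprop1} applied to $\alpha_i$, that $\alpha_i$ has four consecutive entries $w>x>y>z$. Since each $B_j^r$ is increasing, no two of these four values can lie in the same piece of the decomposition, and consecutivity forces $w$ to be the last entry of some $B_j^r$ and $x$ the first entry of $B_{j+1}^r$. If $y$ also belonged to $B_{j+1}^r$, then $B_{j+1}^r$ would begin with $x,y$ and hence satisfy $x<y$, contradicting $x>y$; thus $B_{j+1}^r=\{x\}$ is a singleton. The same argument applied to the pair $y,z$ shows that $B_{j+2}^r=\{y\}$ is a singleton as well.

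Pulling this back to $\alpha_{i-1}$, the consecutive blocks $B_{j+1}=\{x\}$ and $B_{j+2}=\{y\}$ are singletons, so the position between them is a block boundary of $\alpha_{i-1}$, and therefore---again by Property~\ref{traceprop1}---an ascent of $\alpha_{i-1}$. This forces $x<y$, contradicting $x>y$, and the lemma follows. The things to be careful about when writing this up are the bookkeeping of the two invocations of Property~\ref{traceprop1} (once for row $i-1$, to identify blocks with maximal descending runs and block boundaries with ascents, and once for row $i$, to translate from consecutive $\D$'s in $\mu_i$ to a decreasing sequence in $\alpha_i$), and stating cleanly the correspondence between blocks in row $i-1$ and the increasing pieces $B_j^r$ of $\alpha_i$; the combinatorial heart of the argument is simply that two consecutive singleton blocks in any row must be in ascending order.
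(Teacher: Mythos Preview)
Your proof is correct and follows essentially the same approach as the paper. Both arguments assume a block of size at least $4$ in some row $i\geq 2$, use Property~\ref{traceprop1} on row $i-1$ to force the middle entries of that block to come from singleton blocks of $\alpha_{i-1}$, and then obtain a contradiction because two consecutive singleton blocks in $\alpha_{i-1}$ must form an ascent while the corresponding values are decreasing; your phrasing via the decomposition of $\alpha_i$ into strictly increasing pieces $B_j^r$ is simply a cleaner verbal rendering of the paper's picture-based reasoning about bars in the upper row.
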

\begin{proof}
  Given a trace of order $2$ or greater, consider any
  block from any row of its sorting plan, excluding the first row. Assume the block
  has size $k\geq 4$, and that it contains the numbers $a_1, a_2, \ldots, a_k$,
  where $a_1 > a_2 > \cdots > a_k$.
  The neighborhood around the block then looks as follows:
  $$
  \begin{tikzpicture}[line width=\customwidth, yscale=\customscale*1.1, xscale=\customscale*1.55]
    \useasboundingbox (0,-1+0.2) rectangle (6,1);
    \comparray{(0.0,0)}{{2,2,2,2,2,2,2},{1,0,0,0,0,0,1}};
    \permutation{(0.0,0)}{0}{,,,\hspace{0.2em}\cdots,,};
    \permutation{(0.0,0)}{1}{a_1,a_2,a_3,\hspace{0.2em}\cdots,a_{k-1},a_k};
  \end{tikzpicture}
  $$
  Consider the dotted lines in the upper row, except for the
  two outermost dotted lines. If any of them were not solid, then at least two
  of $a_1,a_2,\ldots,a_k$ would be together in a block. These two
  elements would occur in increasing order, violating Property~\ref{traceprop1}
  of Definition~\ref{def:trace}. These dotted lines must thus be solid:
  $$
  \begin{tikzpicture}[line width=\customwidth, yscale=\customscale*1.1, xscale=\customscale*1.55]
    \useasboundingbox (0,-1+0.2) rectangle (6,1);
    \comparray{(0.0,0)}{{2,1,1,1,1,1,2},{1,0,0,0,0,0,1}};
    \permutation{(0.0,0)}{0}{,a_2,a_3,\hspace{0.2em}\cdots,a_{k-1},};
    \permutation{(0.0,0)}{1}{a_1,a_2,a_3,\hspace{0.2em}\cdots,a_{k-1},a_k};
  \end{tikzpicture}
  $$
  Now, however, $a_2$ and $a_3$ are two adjacent numbers in distinct
  blocks that do not form an ascent, again violating
  Property~\ref{traceprop1}. Hence our assumption that $k\geq 4$ must
  have been false.
\end{proof}

Returning to the traces of order $2$, this lemma tells us that our case
analysis, above, is complete. We have thus restricted the possible local
neighborhoods, based on the size of any block in the second row, to the
following:
$$
\begin{tikzpicture}[line width=\customwidthsmall, scale=\customscalesmall]
  \useasboundingbox (0.5,-1+0.1) rectangle (20.5,1);
  \comparray{(0.5,0)}{{1,1},{1,1}};
  \comparray{(2.5,0)}{{1,0},{1,1}};
  \comparray{(4.5,0)}{{0,1},{1,1}};
  \comparray{(6.5,0)}{{0,0},{1,1}};
  \comparray{(8.5,0)}{{1,1,0},{1,0,1}};
  \comparray{(11.5,0)}{{0,1,1},{1,0,1}};
  \comparray{(14.5,0)}{{0,1,0},{1,0,1}};
  \comparray{(17.5,0)}{{0,1,1,0},{1,0,0,1}};
\end{tikzpicture}
$$
We have shown that these eight possibilities are necessary, in the sense
that, if we take a sorting plan of order $2$ and slice it up along the
boundaries of the blocks in the second row, then the pieces must
all be contained in the above set. What is perhaps a bit surprising is
that this holds in the other direction as well: gluing together any
proper sequence of the above pieces gives a sorting plan, where proper
means that
\begin{itemize}
    \item the first piece starts with a fully solid boundary,
    \item the last piece ends with a fully solid boundary, and
    \item for every pair of adjacent pieces, the right boundary of the left
        piece and the left boundary of the right piece coincide.
\end{itemize}
This follows from the more general results of the next section. We will not
provide a proof for this special case here, but the intuition is that
all scenarios that could violate Property~\ref{traceprop1} happen
locally, and the analysis above looks at large enough neighborhoods to
get rid of all of these bad scenarios.

With this classification it now becomes straightforward to count the
sorting plans of order $2$. Let $C$ be the ordinary generating function for
``closed'' sequences; that is, proper sequences of pieces that both begin
and end with a fully solid boundary.  Let $H$ be the ordinary generating
function for ``half open'' sequences; that is, proper sequences that begin
with a fully solid boundary but end with a half-solid boundary. Then
\def\notverybig{0.23}
\begin{align*}
    C &\,=\,
      \begin{tikzpicture}[semithick, scale=\notverybig, baseline={([yshift=-.6ex]current bounding box.center)}]
        \comparray{(0,0)}{{1},{1}}
      \end{tikzpicture}
      \,+\,
      C\hspace{1pt}
      \begin{tikzpicture}[semithick, scale=\notverybig, baseline={([yshift=-.6ex]current bounding box.center)}]
        \comparray{(0,0)}{{1,1},{1,1}}
      \end{tikzpicture}
      \,+\,
      H\!\left(\,
      \begin{tikzpicture}[semithick, scale=\notverybig, baseline={([yshift=-.6ex]current bounding box.center)}]
        \comparray{(0,0)}{{0,1},{1,1}}
      \end{tikzpicture}
      \,+\,
      \begin{tikzpicture}[semithick, scale=\notverybig, baseline={([yshift=-.6ex]current bounding box.center)}]
        \comparray{(0,0)}{{0,1,1},{1,0,1}}
      \end{tikzpicture}
      \,\right);
      \\[0.9ex]
    H &\,=\,
      C\!\left(\,
      \begin{tikzpicture}[semithick, scale=\notverybig, baseline={([yshift=-.6ex]current bounding box.center)}]
        \comparray{(0,0)}{{1,0},{1,1}}
      \end{tikzpicture}
      \,+\,
      \begin{tikzpicture}[semithick, scale=\notverybig, baseline={([yshift=-.6ex]current bounding box.center)}]
        \comparray{(0,0)}{{1,1,0},{1,0,1}}
      \end{tikzpicture}
      \,\right)
      \,+\,
      H\!\left(\,
      \begin{tikzpicture}[semithick, scale=\notverybig, baseline={([yshift=-.6ex]current bounding box.center)}]
        \comparray{(0,0)}{{0,0},{1,1}}
      \end{tikzpicture}
      \,+\,
      \begin{tikzpicture}[semithick, scale=\notverybig, baseline={([yshift=-.6ex]current bounding box.center)}]
        \comparray{(0,0)}{{0,1,0},{1,0,1}}
      \end{tikzpicture}
      \,+\,
      \begin{tikzpicture}[semithick, scale=\notverybig, baseline={([yshift=-.6ex]current bounding box.center)}]
        \comparray{(0,0)}{{0,1,1,0},{1,0,0,1}}
      \end{tikzpicture}
      \,\right).
    \intertext{Using the formal variable $x$ to keep track of the length of the partial
        sorting plan we get}
    C &\,=\, 1 + xC + (x + x^2)H; \\
    H &\,=\, (x + x^2)C + (x + x^2+ x^3)H.
\end{align*}
The sequences that both begin and end with a fully solid boundary are
the sorting plans, so the ordinary generating function for sorting plans of
order $2$, and hence 2-pop-stack-sortable permutations, is $C$.
Solving for it we recover Pudwell and Smith's~\cite{pudwell2017}
result:
$$
C = (x^3 + x^2 + x - 1)/(2x^3 + x^2 + 2x - 1).\\
$$

Here we managed to give a neat classification of sorting plans of
order $2$ by looking at local neighborhoods and getting rid of the
invalid scenarios.  In the next section we will generalize this
approach, looking at what we call forbidden segments.

\section{Forbidden segments}

A semitrace fails to be a trace precisely when there is a pair of elements
witnessing Property~\ref{traceprop1} of Definition~\ref{def:trace}
fail. With the following definition we single out such pairs.

\begin{definition}\label{def:violating_pair}
  Let $T$ be a semitrace of length $n$ and let $a$ and $b$ be two
  distinct elements of $[n]$. We call $(a,b)$ a \emph{violating pair} of
  $T$ if, in any row, $a$ and $b$ occur in adjacent positions such that
  they violate Property~\ref{traceprop1}. That is, $a$ and $b$ form a
  descent and are separated by a bar, or $a$ and $b$ form an ascent and
  are not separated by a bar.
\end{definition}

From Definitions~\ref{def:trace} and~\ref{def:violating_pair} we immediately
get the following characterization of sorting plans.
\begin{lemma}\label{lem:invalid_skeleton_violating_pair}
  An operation array is a sorting plan if and only if its semitrace
  has no violating pair $(a,b)$.
\end{lemma}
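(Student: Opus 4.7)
The plan is to prove this by direct unwinding of definitions; there is essentially no content beyond translating ``Property~\ref{traceprop1}'' into the pair-wise language of Definition~\ref{def:violating_pair}. First I would recall the logical status of each object involved: an operation array $M$ determines a unique semitrace $(A,M)$ (via repeated blockwise reversal starting from the identity), so $M$ being a sorting plan is equivalent to this particular semitrace being a trace. Since a semitrace by definition satisfies Property~\ref{traceprop2} of Definition~\ref{def:trace}, being a trace is equivalent to additionally satisfying Property~\ref{traceprop1}.

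For the forward direction, I would assume $M$ is a sorting plan, so Property~\ref{traceprop1} holds in every row $i$: two adjacent entries of $\alpha_i$ form an ascent if and only if they are separated by a bar of $\mu_i$. By Definition~\ref{def:violating_pair} this is exactly the negation of the existence of a violating pair, so none exists.

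For the converse, I would assume the semitrace has no violating pair. Fix a row $i$ and any two adjacent entries $a,b$ of $\alpha_i$. Since $(a,b)$ is not violating, it is not the case that $a,b$ form a descent separated by a bar, and not the case that they form an ascent without an intervening bar. Equivalently, $a,b$ are separated by a bar if and only if they form an ascent. This is precisely Property~\ref{traceprop1} for row $i$. As $i$ was arbitrary, Property~\ref{traceprop1} holds throughout, so $(A,M)$ is a trace and $M$ a sorting plan.

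The only potential obstacle is a bookkeeping one, namely to make sure the ``separated by a bar'' criterion in Definition~\ref{def:violating_pair} really matches the ascent/descent encoding via $\word(\alpha_i)$ spelled out earlier in the paper; but this correspondence is already built into the picture associated with a trace (bars at $\A$-positions, no bar at $\D$-positions), so the argument reduces to citing definitions. No induction, generating function, or combinatorial argument is needed here.
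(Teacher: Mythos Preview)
Your proposal is correct and mirrors the paper's own treatment: the paper states this lemma as an immediate consequence of Definitions~\ref{def:trace} and~\ref{def:violating_pair} without giving a separate proof, and your argument is exactly the definition-chasing that justifies that sentence.
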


\begin{definition}
  Let $\psi$ be the bijection mapping a sorting plan to its encoding.
  Let $M$ be a sorting plan of length $n$ and let $i$ and $j$ be two indices with
  $1\leq i\leq j\leq n+1$. Let $\psi(M) = c_1c_2\ldots c_{n+1}$ be the encoding of
  $M$. Then we call $S = \psi^{-1}(c_ic_{i+1}\ldots c_{j})$ a
  \emph{segment} of $M$, and $|S| = j-i+1$ its length.
\end{definition}

\begin{definition}
  Let $T=(A,M)$ be a semitrace of length $n$ and let $a$ and $b$ be two
  distinct elements of $[n]$. Let $B$ be the set of blocks that contain either
  $a$ or $b$, excluding blocks in the first row. We define \emph{the segment of $T$
  determined by $a$ and $b$}, denoted $T_{a,b}$, as the smallest segment of $M$
  that fully contains all the blocks in $B$.
\end{definition}

% For instance, the segment $T_{4,5}$ of the trace
% $$
% \begin{tikzpicture}[line width=\customwidth, scale=\customscale]
%   \useasboundingbox (0,-2.5) rectangle (8,1);
%   \comparray{(0.0,0)}{
%     {1,0,1,1,0,0,0,0,1},
%     {1,1,1,0,1,1,1,1,1},
%     {1,1,1,1,0,1,1,1,1}};
%   \permutation{(0.0,0)}{0}{2,1,5,8,7,6,4,3};
%   \permutation{(0.0,0)}{1}{1,2,5,3,4,6,7,8};
%   \permutation{(0.0,0)}{2}{1,2,3,5,4,6,7,8};
%   \permutation{(0.0,0)}{3}{1,2,3,4,5,6,7,8};
% \end{tikzpicture}
% $$
% is the length 3 sorting plan
% $$
%   \begin{tikzpicture}[line width=\customwidth, scale=\customscale]
%     \useasboundingbox (0,-2+0.2) rectangle (3,1);
%     \comparray{(0.0,0)}{
%       {1,1,0,0},
%       {1,0,1,1},
%       {1,1,0,1}};
%   \end{tikzpicture}
% $$

As an example, consider the semitrace
$$
\begin{tikzpicture}[line width=\customwidth, scale=\customscale]
  \useasboundingbox (0,-5+0.5) rectangle (8,1);
  \comparray{(0.0,0)}{
    {1,0,0,1,1,0,0,0,1},
    {1,0,1,0,1,1,1,0,1},
    {1,1,0,1,0,0,1,1,1},
    {1,0,1,0,1,1,0,1,1},
    {1,1,0,1,0,1,1,1,1}};
  \permutation{(0.0,0)}{5}{1,2,3,4,5,6,7,8};
  \permutation{(0.0,0)}{4}{1,3,2,5,4,6,7,8};
  \permutation{(0.0,0)}{3}{3,1,5,2,4,7,6,8};
  \permutation{(0.0,0)}{2}{3,5,1,7,4,2,6,8};
  \permutation{(0.0,0)}{1}{5,3,7,1,4,2,8,6};
  \permutation{(0.0,0)}{0}{7,3,5,1,6,8,2,4};
\end{tikzpicture}
$$
It contains four violating pairs, namely $(1,5)$, $(2,4)$, $(3,5)$, and
$(6,8)$. Let us look at the pair $(2,4)$ and how these two numbers
progress through the trace:
$$
\begin{tikzpicture}[line width=\customwidth, scale=\customscale]
  \useasboundingbox (0,-5+0.4) rectangle (8,1);
  \comparray{(0.0,0)}{
    {1,0,0,1,1,0,0,0,1},
    {1,0,1,0,1,1,1,0,1},
    {1,1,0,1,0,0,1,1,1},
    {1,0,1,0,1,1,0,1,1},
    {1,1,0,1,0,1,1,1,1}};
  \draw[black] (1.5,-4.5) --
               (2.5,-3.5) --
               (3.5,-2.5) --
               (5.5,-1.5) --
               (5.5,-0.5) --
               (6.5,0.5);
  \draw[black] (3.5,-4.5) --
               (4.5,-3.5) --
               (4.5,-2.5) --
               (4.5,-1.5) --
               (4.5,-0.5) --
               (7.5,0.5);
  \foreach \y/\x in {5/1,
                     4/2,
                     3/3,
                     2/5,
                     1/5,
                     0/6,
                     5/3,
                     4/4,
                     3/4,
                     2/4,
                     1/4,
                     0/7} {
      \fill[white] (\x+0.5,-\y+0.5) circle [radius=0.85em];
      \draw[black] (\x+0.5,-\y+0.5) circle [radius=0.85em];
  };
  \permutation{(0.0,0)}{5}{1,2,3,4,5,6,7,8};
  \permutation{(0.0,0)}{4}{1,3,2,5,4,6,7,8};
  \permutation{(0.0,0)}{3}{3,1,5,2,4,7,6,8};
  \permutation{(0.0,0)}{2}{3,5,1,7,4,2,6,8};
  \permutation{(0.0,0)}{1}{5,3,7,1,4,2,8,6};
  \permutation{(0.0,0)}{0}{7,3,5,1,6,8,2,4};
\end{tikzpicture}
$$
The segment $T_{2,4}$ is the smallest segment that contains all blocks
on rows $2$, $3$, $4$, and $5$ with at least one circled element:
$$
\begin{tikzpicture}[line width=\customwidth, scale=\customscale]
  \useasboundingbox (0,-4+0.2) rectangle (5,1);
  \comparray{(0.0,0)}{
    {0,0,1,1,0,0},
    {0,1,0,1,1,1},
    {1,0,1,0,0,1},
    {0,1,0,1,1,0},
    {1,0,1,0,1,1}};
\end{tikzpicture}
$$
Any sorting plan of order $5$ that contains this segment, no matter where
it occurs horizontally, will violate Property~\ref{traceprop1}, just as the
above semitrace. This is because we can follow the two elements $a$ and $b$ playing
the roles of $2$ and $4$ from the bottom to the second row.
$$
  \begin{tikzpicture}[line width=\customwidth, scale=\customscale]
    \useasboundingbox (0,-4.6) rectangle (5,1);
    \comparray{(0.0,0)}{
      {0,0,1,1,0,0},
      {0,1,0,1,1,1},
      {1,0,1,0,0,1},
      {0,1,0,1,1,0},
      {1,0,1,0,1,1}};
    \draw[black] (0.5,-4.5) --
                 (1.5,-3.5) --
                 (2.5,-2.5) --
                 (4.5,-1.5) --
                 (4.5,-0.5);
    \draw[black,densely dotted] (4.5,-0.5) -- (4.5,0.5);
    \draw[black] (2.5,-4.5) --
                 (3.5,-3.5) --
                 (3.5,-2.5) --
                 (3.5,-1.5) --
                 (3.5,-0.5);
    \draw[black,dotted] (3.5,-0.5) -- (5.5,0.5);

    \draw[black,dotted] (5,1) --
                        (6.5,1) --
                        (6.5,0) --
                        (5,0);

    \foreach \y/\x in {5/1,
                       4/2,
                       3/3,
                       2/5,
                       1/5,
                       0/5,
                       5/3,
                       4/4,
                       3/4,
                       2/4,
                       1/4,
                       0/6} {
        \fill[white] (\x-1+0.5,-\y+0.5) circle [radius=0.85em];
        \draw[black] (\x-1+0.5,-\y+0.5) circle [radius=0.85em];
      };
    \permutation{(0.0,0)}{5}{a,,b,,,,};
    \permutation{(0.0,0)}{4}{,a,,b,,,};
    \permutation{(0.0,0)}{3}{,,a,b,,,};
    \permutation{(0.0,0)}{2}{,,,b,a,,};
    \permutation{(0.0,0)}{1}{,,,b,a,,};
    \node[text height=1.5ex,text depth=.25ex] at (4.5,0.5) {$a$};
    \node[text height=1.5ex,text depth=.25ex] at (5.5,0.5) {$b$};
    % \permutation{(0.0,0)}{0}{,,,,,a,b};
  \end{tikzpicture}
$$
Formally, we make the following definition.
\begin{definition}\label{def:forbidden_segment}
  A segment
  $T_{a,b}$ is \emph{forbidden} if, after inferring the positions of
  $a$ and $b$ in rows $2$ to
  $k$, either the two numbers violate Property~\ref{traceprop1} on row
  $i$, where $2 \leq i \leq k$, or
  \begin{enumerate}
  \item\label{forbidden_segment_case_1} $a$ and $b$ form a descent on row $2$ and, if
    $a$ and $b$ are in columns $i$ and $i+1$ on row $2$, there is no bar
    separating columns $i$ and $i+1$ on row $1$, or
  \item\label{forbidden_segment_case_2} $a$ and
    $b$ are in decreasing order on row $2$ and, if $a$ and
    $b$ are in columns $i$ and $j$ on row $2$, with
    $i<j$, there is a bar immediately to the left of column
    $i$ on row $1$, a bar immediately to the right of column $j$ on row
    $1$, and exactly one bar between columns $i$ and $j$ on row $1$.
  \end{enumerate}
  In both cases the bars that we reference belong to the segment
  $T_{a,b}$. In other words, we can determine if
  $T_{a,b}$ is forbidden or not without knowing in which semitrace it is embedded.
\end{definition}

\begin{lemma}\label{lem:forbidden_segment_violating_pair}
  A segment $T_{a,b}$ is forbidden if and only if $(a,b)$ is a violating
  pair of $T$.
\end{lemma}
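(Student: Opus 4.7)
The plan is a two-direction argument, and in both directions the key manoeuvre is the same: track $a$ and $b$ up and down the semitrace using that row-to-row transitions are given by the blockwise reversal $\alpha_i = \rev(\alpha_{i+1},\mu_i)$, so each element stays within its block while positions inside a block get reversed.

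For $(\Leftarrow)$, assume $(a,b)$ is a violating pair, witnessed on some row $i \in \{1,\ldots,k\}$. If $i \geq 2$, the positions of $a$ and $b$ on row $i$ and the row-$i$ bar between them already lie inside $T_{a,b}$, since this segment was designed to contain every row-$i$ block touching $a$ or $b$; the first clause of Definition~\ref{def:forbidden_segment} is satisfied. If $i = 1$ the violation has one of two forms. An ascent-without-bar on row $1$ places $a$ and $b$ in adjacent columns of a common row-$1$ block; reversing that block keeps them adjacent on row $2$ but swaps them, producing a descent on row $2$ with no row-$1$ bar between the two columns, which is case~(\ref{forbidden_segment_case_1}). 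A descent-with-bar on row $1$ makes the larger of the two the rightmost entry of its row-$1$ block and the smaller the leftmost of the next block; after reversal the larger moves to the left end of the first block and the smaller to the right end of the second, giving a row-$2$ pair in decreasing order whose surrounding and intervening row-$1$ bars match case~(\ref{forbidden_segment_case_2}) exactly.

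For $(\Rightarrow)$, I would invert the same bookkeeping. If the first clause of Definition~\ref{def:forbidden_segment} is what makes $T_{a,b}$ forbidden, then $(a,b)$ is a violating pair by definition. If case~(\ref{forbidden_segment_case_1}) holds, the unique row-$1$ block covering the two adjacent columns of $a$ and $b$ on row $2$ reverses to place them adjacent on row $1$ in ascending order, unseparated by a bar—an ascent-without-bar violation. If case~(\ref{forbidden_segment_case_2}) holds, the stipulated row-$1$ bar pattern forces exactly two row-$1$ blocks spanning the closed column interval between $a$ and $b$; reversing these sends $a$ to the right end of the left block and $b$ to the left end of the right block, so on row $1$ they are adjacent and in decreasing order separated by the unique interior bar—a descent-with-bar violation.

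The main technical point is the bookkeeping in case~(\ref{forbidden_segment_case_2}), where $a$ and $b$ can be non-adjacent on row $2$ yet adjacent on row $1$; checking this requires observing that the two row-$1$ blocks must together span exactly the closed interval between the row-$2$ columns of $a$ and $b$, and that the unique interior bar is precisely the one separating them on row $1$. Beyond this, the proof is a routine unwinding of the identity $\alpha_i = \rev(\alpha_{i+1},\mu_i)$ applied to $a$ and $b$; the containment of all referenced bars in $T_{a,b}$ is guaranteed by the remark following Definition~\ref{def:forbidden_segment}.
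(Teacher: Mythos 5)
Your proposal is correct and follows essentially the same route as the paper's proof: the same split into a row-$i$ violation for $i\geq 2$ versus the two row-$1$ cases, matched to the two clauses of Definition~\ref{def:forbidden_segment} by tracking $a$ and $b$ through the blockwise reversal between rows $1$ and $2$. Your explicit bookkeeping in case~(\ref{forbidden_segment_case_2}) (the two row-$1$ blocks spanning the interval between the row-$2$ columns) is just a spelled-out version of what the paper delegates to its figures, and the labelling of which of $a,b$ is larger is immaterial.
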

\begin{proof}
  Consider a forbidden segment $T_{a,b}$. If, after inferring the
  positions of $a$ and $b$ in rows $2$ to $k$, the two numbers violate
  Property~\ref{traceprop1} on row $i$, where $2\leq i\leq k$, then
  $(a,b)$ is a violating pair of $T$. Otherwise we have two cases,
  corresponding to the two cases of Definition~\ref{def:forbidden_segment},
  and these are illustrated in Figures~\ref{fig:forbidden_segment_1} and
  \ref{fig:forbidden_segment_2}, respectively.
  In the first case $a$ and  $b$ form an ascent and are in the same block on row $1$.
  In the second case $a$ and $b$ form a descent and are separated by a bar on row $1$.
  In both cases we have found a violation of Property~\ref{traceprop1}
  and hence $(a,b)$ is a violating pair of $T$.
  Conversely, consider a violating pair $(a,b)$ of $T$, as well as the
  segment $T_{a,b}$. If $a$ and $b$ violate Property~\ref{traceprop1} on row
  $i$, where $2\leq i\leq k$, then $T_{a,b}$ is a forbidden segment. If, on
  the other hand, the two numbers violate Property~\ref{traceprop1} on row $1$, then
  we have two cases:
  \begin{itemize}
  \item If, on row $1$, $a$ and $b$ form an ascent and are not separated
    by a bar, then the two numbers are in the same block on row $1$,
    and will form a descent on row $2$. Furthermore, if the two numbers
    are in columns $i$ and $i+1$ on row $2$, there will not be a bar
    separating columns $i$ and $i+1$ on row $1$. Hence $T_{a,b}$ is a
    forbidden segment.
  \item If, on row $1$, $a$ and $b$ form a descent and are separated by a
    bar, then the two numbers are in adjacent blocks on row $1$. If $i$
    is the leftmost column that the left block intersects, and $j$ is
    the rightmost column that the right block intersects, then the two
    numbers will be in decreasing order on row $2$, with the larger
    number number in column $i$ and the smaller number in column $j$.
    Furthermore, there is a bar immediately to the left of column $i$ on row $1$
    and a bar immediately to the right of column $j$ on row $1$, and exactly one
    bar between columns $i$ and $j$ on row $1$. Hence $T_{a,b}$ is a
    forbidden segment.
  \end{itemize}
  In both cases $T_{a,b}$ is a forbidden segment.
\end{proof}
\begin{figure}
  \centering
  \begin{tikzpicture}[line width=\customwidth, scale=\customscale]
    \useasboundingbox (-1,0.5) rectangle (13,2.5);
    \draw (0,2) rectangle (12,1);
    \begin{scope}[every node/.style={text height=1.5ex,text depth=.25ex}]
      \node at (2.5, 1.5) {$a$}; \node at ( 3.5, 1.5) {$b$};
      \node at (8.5, 0.5) {$b$}; \node at ( 9.5, 0.5) {$a$};
      \foreach \y in {1,2} {
        \node at (-0.8,-\y+2.5) {$\scriptstyle{\y}$};
      }
      \foreach \y in {1,2} { % invisible numbers on rhs
        \node[white] at (12.8,-\y+2.5) {$\scriptstyle{\y}$};
      }
      \node at (8.5, 2.5) {$\scriptstyle{i}$};
      \node at (9.5, 2.5) {$\scriptstyle{i+1}$};
    \end{scope}
  \end{tikzpicture}
  \caption{The first case of Definition~\ref{def:forbidden_segment}, assuming $a<b$}
  \label{fig:forbidden_segment_1}
\end{figure}
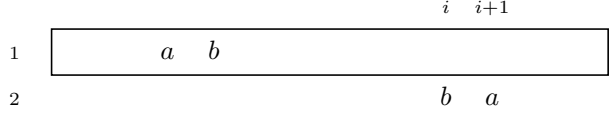
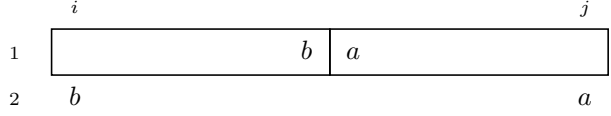
\begin{figure}
  \centering
  \begin{tikzpicture}[line width=\customwidth, scale=\customscale]
    \useasboundingbox (-1,0.5) rectangle (13,2.5);
    \draw (0,2) rectangle (6,1); \draw (6,2) rectangle (12,1);
    \begin{scope}[every node/.style={text height=1.5ex,text depth=.25ex}]
      \node at (5.5, 1.5) {$b$}; \node at (  6.5, 1.5) {$a$};
      \node at (0.5, 0.5) {$b$}; \node at ( 11.5, 0.5) {$a$};
      \foreach \y in {1,2} {
        \node at (-0.8,-\y+2.5) {$\scriptstyle{\y}$};
      }
      \foreach \y in {1,2} { % invisible numbers on rhs
        \node[white] at (12.8,-\y+2.5) {$\scriptstyle{\y}$};
      }
      \node at ( 0.5, 2.5) {$\scriptstyle{i}$};
      \node at (11.5, 2.5) {$\scriptstyle{j}$};
    \end{scope}
  \end{tikzpicture}
  \caption{The second case of Definition~\ref{def:forbidden_segment}, assuming $a<b$}
  \label{fig:forbidden_segment_2}
\end{figure}

From Lemmas~\ref{lem:invalid_skeleton_violating_pair} and
\ref{lem:forbidden_segment_violating_pair} we get the following lemma.
\begin{lemma}\label{lem:operation_array_skeleton}
  An operation array is a sorting plan if and only if it does not contain
  any forbidden segment $T_{a,b}$.
\end{lemma}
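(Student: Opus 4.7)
The plan is to observe that this lemma is essentially an immediate corollary of the two preceding lemmas, so the entire proof amounts to a two-step chain of biconditionals. First I would note that an operation array determines its semitrace (via the blockwise reversal construction described right after Definition~\ref{def:rev}), so it makes sense to speak of ``the'' semitrace associated with an operation array and of violating pairs and segments $T_{a,b}$ in it.

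Next I would apply Lemma~\ref{lem:invalid_skeleton_violating_pair}, which gives the equivalence that an operation array is a sorting plan if and only if the associated semitrace has no violating pair $(a,b)$. Then, for every pair of distinct elements $a,b\in[n]$, Lemma~\ref{lem:forbidden_segment_violating_pair} tells us that $(a,b)$ is a violating pair if and only if the segment $T_{a,b}$ is forbidden. Combining these two equivalences, the semitrace has no violating pair if and only if none of the segments $T_{a,b}$ is forbidden, which is exactly what the lemma asserts.

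I do not expect any real obstacle here: once the two preceding lemmas are in hand, the argument is a one-line transitive chain of equivalences, and the only subtle point is just to make clear that the range of $(a,b)$ in ``no violating pair'' and in ``no forbidden segment $T_{a,b}$'' is the same (all unordered pairs of distinct elements of $[n]$), so that quantifier-wise the two statements line up.
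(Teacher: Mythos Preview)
Your proposal is correct and matches the paper's own justification exactly: the paper simply states that this lemma follows from Lemmas~\ref{lem:invalid_skeleton_violating_pair} and~\ref{lem:forbidden_segment_violating_pair}, which is precisely the two-step chain of equivalences you describe.
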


While this lemma is interesting, it is of limited practical use the way
it is stated. The reason being that there are potentially an infinite
number of forbidden segments $T_{a,b}$. This motivates the following
definition.

\begin{definition}
  A segment of order $k$ is \emph{bounded} if each of its blocks in rows
  $2$ to $k$ has size at most $3$. Equivalently, a segment is bounded
  if its operation array has no occurrence of three consecutive \D's on
  rows $2$ to $k$.
\end{definition}

\begin{proposition}\label{prop:operation_array_skeleton_bounded}
  An operation array is a sorting plan if and only if it does not contain any
  bounded forbidden segment $T_{a,b}$ and each block on rows $2$ through $k$ is
  of size at most $3$.
\end{proposition}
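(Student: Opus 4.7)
The plan is to reduce the proposition to Lemma~\ref{lem:operation_array_skeleton} using Lemma~\ref{lem:blocks3}. Both directions will be short, so the bulk of the work is just checking that the notion of ``bounded'' is the right bridge between the two lemmas.

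For the forward direction, I would argue as follows. Assume the operation array is a sorting plan. Lemma~\ref{lem:operation_array_skeleton} gives that it contains no forbidden segment $T_{a,b}$ whatsoever, so in particular no \emph{bounded} such segment. Moreover, the sorting plan is, by definition, the sorting plan of a trace of order $k\ge 2$ (if $k=1$ there is nothing to verify in the second half of the statement, since the conditions on rows $2$ through $k$ are vacuous), so Lemma~\ref{lem:blocks3} applies and shows that each of the rows $2,\dots,k$ has all blocks of size at most $3$. This gives both halves of the ``only if''.

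For the reverse direction, suppose the operation array contains no bounded forbidden segment and that every block on rows $2$ through $k$ has size at most $3$. The key observation is that, under this block-size assumption, \emph{every} segment $T_{a,b}$ is bounded. Indeed, by the definition of $T_{a,b}$ it fully contains every block of rows $2,\dots,k$ that contains $a$ or $b$, and the blocks lying in rows $2$ through $k$ of the segment are either full blocks of the enclosing array (hence of size at most $3$ by hypothesis) or initial/terminal truncations of such blocks (which can only make them shorter). So every $T_{a,b}$ has all rows $2,\dots,k$ blocks of size at most $3$, i.e.\ is bounded. Consequently, the hypothesis that no bounded forbidden segment occurs upgrades to the stronger statement that no forbidden segment occurs at all, and Lemma~\ref{lem:operation_array_skeleton} then yields that the operation array is a sorting plan.

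I do not foresee any real obstacle; the only point that requires mild care is making sure the boundary blocks of a segment inherit the size bound from the enclosing operation array, which I would state explicitly using the fact that the segment is literally a contiguous substring of the encoding $\psi(M)$, so that its interior bar pattern coincides with that of $M$ on the corresponding columns. Once that is spelled out, the bounded/forbidden distinction collapses under the block-size-$\leq 3$ assumption, and the proposition follows immediately from Lemmas~\ref{lem:blocks3} and~\ref{lem:operation_array_skeleton}.
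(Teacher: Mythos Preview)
Your proposal is correct and follows essentially the same route as the paper: both directions rest on Lemma~\ref{lem:operation_array_skeleton} together with Lemma~\ref{lem:blocks3}, with the reverse direction hinging on the observation that if all blocks on rows $2,\dots,k$ of the operation array have size at most $3$ then any segment (and hence any forbidden segment) it contains is automatically bounded. The only cosmetic difference is that you argue directly while the paper phrases the reverse implication as a proof by contradiction; your extra sentence about boundary blocks being truncations is a nice clarification but not a new idea.
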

\begin{proof}
  Let a sorting plan be given.  By
  Lemma~\ref{lem:operation_array_skeleton} it contains no forbidden
  segment $T_{a,b}$, and, in particular, no bounded forbidden segment
  $T_{a,b}$. Furthermore, each block on rows $2$ through $k$ is of size
  at most $3$ by Lemma~\ref{lem:blocks3}.

  Conversely, assume that we are given an operation array that does not
  contain any bounded forbidden segment $T_{a,b}$ and that each block on
  rows $2$ through $k$ is of size at most $3$.  If the operation array
  is not a sorting plan, then the operation array contains a forbidden
  segment $T_{a,b}$ by Lemma~\ref{lem:operation_array_skeleton}. This
  forbidden segment is not bounded, so it has to contain a block of size
  greater than $3$ in one of the rows $2$ through $k$, a contradiction.
  Hence the operation array must be a sorting plan.
\end{proof}

\begin{lemma}\label{lem:forbidden_segment_bound}
    Let $T$ be a semitrace of length $n$ and order $k$ and let $a$ and $b$ be
    two distinct elements of $[n]$. If $T_{a,b}$ is a bounded segment, and there is
    a block, not on the first row, that includes both $a$ and $b$, then
    $|T_{a,b}| \leq 4k-5$.
\end{lemma}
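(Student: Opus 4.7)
The plan is to fix a row $r\in\{2,\ldots,k\}$ on which $a$ and $b$ share a block, and to track how far the blocks of rows $2$ through $k$ that contain $a$ or $b$ can drift away from that common block as one moves through the intermediate rows. Write $[L_r,R_r]$ for the column range of the common block. Since the segment is bounded, this block has size at most $3$, and in particular $R_r-L_r\le 2$.

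The key invariant I would establish is the following: for every $i$ with $2\le i\le k$, the block $[L_i^a,R_i^a]$ of row $i$ containing $a$ satisfies $L_i^a\ge L_r-2|i-r|$ and $R_i^a\le R_r+2|i-r|$, and likewise for the block $[L_i^b,R_i^b]$ containing $b$. I would proceed by induction on $|i-r|$; the base case $|i-r|=0$ is immediate from $L_r^a=L_r^b=L_r$ and $R_r^a=R_r^b=R_r$. For the inductive step it suffices to establish a one-step bound $|L_{i+1}^a-L_i^a|\le 2$ and $|R_{i+1}^a-R_i^a|\le 2$ whenever $i,i+1\in\{2,\ldots,k\}$. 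This is where the structure of the trace enters: the identity $\rev(\alpha_i,\mu_i)=\alpha_{i+1}$ says that $\alpha_{i+1}$ is obtained from $\alpha_i$ by reversing each block of $\mu_i$ in place, so $a$'s column in row $i+1$ lies in the same block of $\mu_i$ as $a$'s column in row $i$, namely in $[L_i^a,R_i^a]$. Since $a$'s column in row $i+1$ is also in $[L_{i+1}^a,R_{i+1}^a]$ by definition, the two intervals share a column, and each has width at most $2$ by boundedness; the one-step bound is then immediate. The same reasoning applies to $b$.

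With the invariant in hand, $T_{a,b}$ is by definition the shortest contiguous range of columns that covers all blocks of rows $2$ through $k$ containing $a$ or $b$, and this union is contained in $[L_r-2t_{\max},R_r+2t_{\max}]$, where $t_{\max}=\max(r-2,k-r)\le k-2$. Therefore
\[
|T_{a,b}| \le (R_r-L_r+1) + 4t_{\max} \le 3 + 4(k-2) = 4k-5,
\]
which is the required bound; it is tight precisely when $r\in\{2,k\}$.

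The main obstacle is formulating and proving the one-step bound cleanly. The crucial ingredient is that blockwise reversal preserves the column range of each block, which keeps $a$'s column in two adjacent rows confined to a common block of width at most $2$; once that is spelled out, the bound reduces to a simple telescoping argument plus the elementary observation that $r$ is at most $k-2$ steps from either end of $\{2,\ldots,k\}$.
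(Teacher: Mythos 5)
Your proposal is correct and takes essentially the same route as the paper's proof: both arguments rest on the observation that, since every relevant block has size at most $3$, the horizontal drift between consecutive rows is at most $2$ columns, so anchoring at the common block (width at most $3$) and allowing at most $k-2$ steps to either extreme row yields the bound $3+4(k-2)=4k-5$. Your bookkeeping in terms of the block intervals $[L_i^a,R_i^a]$, using that consecutive blocks containing $a$ must share a column, is a slightly more careful rendering of the step the paper states only for the positions of $a$ and $b$ (leaving the block widths to its figure), but it is the same argument, not a different one.
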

\begin{proof}
  First note that we can completely disregard the first row, as neither this
  lemma nor the definition of $T_{a,b}$ includes blocks on that row. Since
  $T_{a,b}$ is a bounded segment, each of the remaining blocks that either
  contains $a$ or $b$ has size at most $3$. If we consider two adjacent rows,
  and $x\in\{a,b\}$, this implies that the horizontal distance between $x$ in
  the upper row and $x$ in the lower row is at most $2$. In total, the
  horizontal distance between $x$ on the second row and $x$ on the $k$-th row
  is at most $2(k-2)$ as illustrated in Figure~\ref{fig:wide-skeleton}. From
  this we see that the length of the segment $T_{a,b}$ is at most $4(k-2)+m$,
  where $m$ is the size of the block that contains $a$ and $b$. Noting that
  $m\leq 3$ gives us the desired bound.
  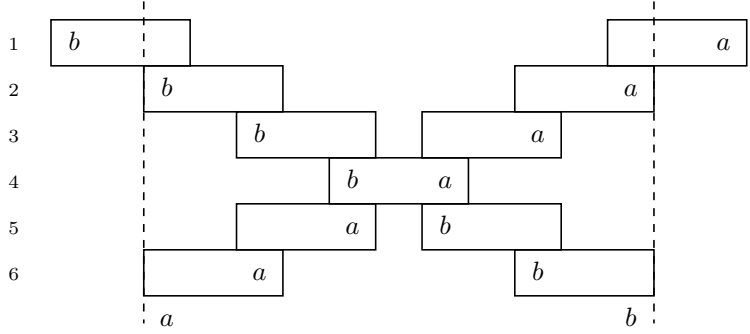
\begin{figure}\label{fig:wide-skeleton}
    \centering
    \begin{tikzpicture}[line width=\customwidth, scale=\customscale]
      \useasboundingbox (-3,-4.5) rectangle (14,2.3);
      \draw [dashed] (0,2.4) -- (0,-4.6);
      \draw [dashed] (11,2.4) -- (11,-4.6);
      \draw (-2, 2) rectangle (1, 1); \draw (10, 2) rectangle (13, 1);
      \draw (0, 1) rectangle (3, 0); \draw (8, 1) rectangle (11, 0);
      \draw (2, 0) rectangle (5,-1); \draw (6, 0) rectangle ( 9,-1);
      \draw (4,-1) rectangle (7,-2);
      \draw (2,-2) rectangle (5,-3); \draw (6,-2) rectangle ( 9,-3);
      \draw (0,-3) rectangle (3,-4); \draw (8,-3) rectangle (11,-4);
      \begin{scope}[every node/.style={text height=1.5ex,text depth=.25ex}]
        \node at (-2+0.5, 1+0.5) {$b$}; \node at (12+0.5, 1+0.5) {$a$};
        \node at (0+0.5, 0+0.5) {$b$}; \node at (10+0.5, 0+0.5) {$a$};
        \node at (2+0.5,-1+0.5) {$b$}; \node at ( 8+0.5,-1+0.5) {$a$};
        \node at (4+0.5,-2+0.5) {$b$}; \node at ( 6+0.5,-2+0.5) {$a$};
        \node at (4+0.5,-3+0.5) {$a$}; \node at ( 6+0.5,-3+0.5) {$b$};
        \node at (2+0.5,-4+0.5) {$a$}; \node at ( 8+0.5,-4+0.5) {$b$};
        \node at (0+0.5,-5+0.5) {$a$}; \node at ( 10+0.5,-5+0.5) {$b$};
        \foreach \y in {1,2,3,4,5,6} {
          \node at (-2.8,-\y+2.5) {$\scriptstyle{\y}$};
        }
        \foreach \y in {1,2,3,4,5,6} { % invisible numbers on rhs
          \node[white] at (13.8,-\y+2.5) {$\scriptstyle{\y}$};
        }
      \end{scope}
    \end{tikzpicture}
    \caption{The blocks containing $a$ or $b$ in a trace of order 6 with
      the dashed lines indicating the boundary of the segment $T_{a,b}$}
  \end{figure}
\end{proof}

\begin{lemma}\label{lem:block_with_ab}
    Let $T$ be a semitrace and $(a,b)$ a violating pair of $T$. Then there
    is a block, not on the first row, that includes both $a$ and $b$.
\end{lemma}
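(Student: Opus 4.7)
I would tackle this by proving the contrapositive: if $a$ and $b$ share no block on any row $i \in \{2,\ldots,k\}$, then $(a,b)$ cannot be a violating pair. Without loss of generality take $a<b$, so in $\alpha_{k+1}$ (the identity) the element $a$ sits to the left of $b$.

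The key observation I would lean on is that the blockwise reversal $\rev(\alpha_i,\mu_i)=\alpha_{i+1}$ is block-local: each element stays inside its own block, and the blocks themselves preserve their left-to-right order. Hence the relative order of $a$ and $b$ flips between $\alpha_i$ and $\alpha_{i+1}$ if and only if they lie in a common block of $\alpha_i$. Under the standing hypothesis this never happens for $i\geq 2$, so the relative order is invariant from row $k+1$ all the way up to row $2$. In particular, $a$ precedes $b$ in every $\alpha_i$ with $2\leq i\leq k+1$.

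Next I would case-split on the row $r$ at which $(a,b)$ is supposed to witness a violation. For $r\geq 2$, adjacency of $a$ and $b$ in $\alpha_r$ gives an ascent (since $a$ precedes $b$), but by hypothesis they lie in different blocks of $\alpha_r$ and so are separated by a bar; ascent with a bar is not a violation. For $r=1$, I would subcase on whether $a$ and $b$ lie in a common block of $\alpha_1$. If they do, then passing to $\alpha_2$ flips their order, and since $a$ precedes $b$ in $\alpha_2$ we must have $b$ before $a$ in $\alpha_1$; adjacency inside the same block is a descent with no bar, again not a violation. If they do not, the invariance argument extends down to row $1$, so $a$ precedes $b$ in $\alpha_1$; adjacency across distinct blocks then gives an ascent with a bar, still no violation.

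In every case no row can carry a violation, so $(a,b)$ is not a violating pair, contradicting the hypothesis and establishing the contrapositive. I do not expect any real obstacle: the only nontrivial ingredient is the block-locality of $\rev$, which is immediate from Definition~\ref{def:rev} since blocks neither merge nor swap positions—they only get reversed internally. The rest is a clean two-case (plus one subcase) bookkeeping argument.
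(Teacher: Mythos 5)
Your argument is correct and rests on exactly the same key fact as the paper's proof: the relative order of $a$ and $b$ changes from one row to the next precisely when they share a block, anchored by their known increasing order in the identity permutation at the bottom. The paper runs this directly (a violation on row $i$ forces $b$ to precede $a$ on row $i+1\geq 2$, which must then be undone by a shared block on some row between $i+1$ and $k$), whereas you take the contrapositive and add a small extra case analysis at row $1$; both versions are sound, so this is essentially the paper's argument in reverse.
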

\begin{proof}
    Consider a row $i$ where the pair $(a,b)$ violates
    Property~\ref{traceprop1}. We have the following two cases:
    \begin{itemize}
        \item If $a$ and $b$ form a descent and are separated by a bar, then
            the two elements are in distinct blocks on row $i$, and will
            still be in descending order on row $i+1$ after performing the
            blockwise reversals.
        \item If $a$ and $b$ form an ascent and are not separated by a bar,
            then the two elements are in the same block on row $i$, and will
            be in descending order on row $i+1$ after performing the
            blockwise reversals.
    \end{itemize}
    In either case, $a$ and $b$ will be in descending order on row $i+1$. Since
    the two elements are in increasing order in the last permutation, i.e.\ the
    identity permutation, the two elements must be reversed on at least
    one of the rows between $i+1$ and $k$. Since the relative order of elements
    is only reversed when they appear together in a block, there must be a
    block on one of the rows between $i+1$ and $k$ that includes both $a$ and
    $b$.
\end{proof}

\begin{lemma}\label{lem:finite_forbidden_segments}
  For a fixed $k$, there are finitely many bounded forbidden segments $T_{a,b}$
  of order $k$, and they can be listed.
\end{lemma}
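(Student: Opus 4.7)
The plan is a short chain of lemma invocations followed by a finite enumeration.

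First, I would establish the uniform bound $|T_{a,b}| \leq 4k-5$ for every bounded forbidden segment $T_{a,b}$ of order $k$. By Lemma~\ref{lem:forbidden_segment_violating_pair}, a segment being forbidden is equivalent to $(a,b)$ being a violating pair of the ambient semitrace. Lemma~\ref{lem:block_with_ab} then supplies a block, not on the first row, that contains both $a$ and $b$. This is precisely the hypothesis required to invoke Lemma~\ref{lem:forbidden_segment_bound}, which yields the desired length bound.

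Second, a segment of order $k$ and length $\ell$ is fully specified by its $k \times \ell$ binary operation array together with the columns in which $a$ and $b$ sit on some one row (their positions on all other rows being forced by the $\rev$ operator). Thus there are only finitely many candidate segments of length at most $4k-5$, and the bounded forbidden segments form a subset of this finite collection, so there are only finitely many of them.

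For the listing, one enumerates, for each $\ell = 1, 2, \dots, 4k-5$, every operation array of order $k$ and length $\ell$, and for each candidate checks two decidable conditions: (i) boundedness, by scanning the array for three consecutive $\D$'s in any row but the first; and (ii) whether there is a choice of positions for $a$ and $b$ satisfying the explicit conditions of Definition~\ref{def:forbidden_segment}. The required inference of the positions of $a$ and $b$ is a straightforward application of the $\rev$ operator, propagating from an initial row to the rest. The procedure terminates and returns the complete list.

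The only conceptual step of any substance is recognising that Lemma~\ref{lem:block_with_ab} furnishes exactly the hypothesis missing from the statement of Lemma~\ref{lem:forbidden_segment_bound}; everything else is routine counting and enumeration.
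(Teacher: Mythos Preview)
Your proposal is correct and follows essentially the same route as the paper: invoke Lemma~\ref{lem:forbidden_segment_violating_pair} to get a violating pair, Lemma~\ref{lem:block_with_ab} to obtain a shared block off the first row, and Lemma~\ref{lem:forbidden_segment_bound} to conclude $|T_{a,b}|\le 4k-5$, then observe that only finitely many segments of bounded length exist and each can be tested against Definition~\ref{def:forbidden_segment}. Your treatment of the enumeration step is a bit more explicit than the paper's, but the argument is the same.
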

\begin{proof}
  Assume that $T_{a,b}$ is a bounded forbidden segment. Then $(a,b)$ is a
  violating pair by
  Lemma~\ref{lem:forbidden_segment_violating_pair}. Moreover, using
  Lemma~\ref{lem:block_with_ab}, we find that there is a block, not on
  the first row, that includes both $a$ and $b$.
  Lemma~\ref{lem:forbidden_segment_bound} thus applies and
  $|T_{a,b}|\leq 4k-5$. Viewing the segment $T_{a,b}$ as an
  operation array, there consequently are at most as many forbidden segments
  $T_{a,b}$ as there are words in the finite language
  $\{\A,\D\}^{(4k-5)k}$, and they can be listed by checking each
  such operation array against Definition~\ref{def:forbidden_segment}.
\end{proof}

\section{Regular language}

Now that we have a characterization of sorting plans in terms of forbidden
segments, we will use that characterization to count sorting plans of order $k$,
and hence the $k$-pop-stack-sortable permutations. To do so, we will employ the
theory of formal languages.

Recall that we can encode an operation array of length $n$ and order $k$ as a
sequence of $n$ integers, each in the range $[0,2^k-1]$.
In this way we can consider operation arrays as strings of a formal language over the
alphabet $\Sigma = \{0,1,\ldots,2^k-1\}$. Conversely, strings over this
alphabet can be considered as operation arrays, under one condition: that they both
begin and end with a solid boundary. Noting that a solid boundary corresponds
to the integer $0$ from $\Sigma$, and letting $\overline{0} =
\Sigma\setminus\{0\}$, the following deterministic finite automaton (DFA), $W$,
recognizes the strings over $\Sigma$ that begin and end with a solid boundary,
i.e.\ the strings that correspond to an operation array:
\begin{center}
\begin{tikzpicture}[->,>=stealth',shorten >=1pt,auto,node distance=2.7cm,
                    semithick]
  \node[initial,state] (A) {};
  \node[accepting,state] (B) [right of=A] {};
  \node[state] (C) [right of=B] {};
  \node[state] (D) [below right of=A] {};

  \path (A) edge              node {$0$} (B)
        (B) edge[bend left] node {$\overline{0}$} (C)
        (C) edge[bend left] node {$0$} (B)
        (B) edge[loop above] node {$0$} (B)
        (C) edge[loop above] node {$\overline{0}$} (C)
        (D) edge[loop right] node {$\Sigma$} (D)
        (A) edge node {$\overline{0}$} (D);
\end{tikzpicture}
\end{center}

For ease of notation we will use the name of a DFA to also denote the
language that it recognizes. Here, $W$ denotes the DFA recognizing
strings that begin and end with a solid boundary as well as the language
consisting of such strings. We want to find the subset of $W$
corresponding to sorting plans. Recall from
Proposition~\ref{prop:operation_array_skeleton_bounded} that an operation array is
a sorting plan if and only if it does not contain any bounded forbidden segments and
each block on rows $2$ through $k$ is of size at most $3$. We shall start
with the latter condition.

If $A_i$ is the set of symbols from $\Sigma$ that represent a column
from the operation array that has a bar in the $i$th row, and $\overline{A_i} =
\Sigma\setminus A_i$, then the following DFA, $R_i$, recognizes the operation
arrays that have blocks of size at most $3$ in row $i$:
\begin{center}
\begin{tikzpicture}[->,>=stealth',shorten >=1pt,auto,node distance=2.7cm,
                    semithick]
  \node[initial,accepting,state] (A) {};
  \node[accepting,state] (B) [below right of=A] {};
  \node[accepting,state] (C) [above right of=B] {};
  \node[state] (D) [right of=C] {};

    \path (A) edge[bend left] node {$\overline{A_i}$} (B)
          (B) edge[bend right] node {$\overline{A_i}$} (C)
          (C) edge node {$\overline{A_i}$} (D)
          (D) edge[loop above] node {$\Sigma$} (D)
          (A) edge[loop above] node {$A_i$} (A)
          (B) edge[bend left] node[below] {$A_i$} (A)
          (C) edge[bend right] node[above] {$A_i$} (A);
\end{tikzpicture}
\end{center}
Therefore, the set of operation arrays that have blocks of size at most $3$ in
all but the first row is recognized by the DFA $W\cap R_2 \cap \cdots \cap R_k$.

The other condition that sorting plans satisfy is that they do not contain any
bounded forbidden segments. Consider a segment $M$ and let us encode it in the
same manner as we encode operation arrays, resulting in the sequence
$m_1,\ldots,m_{\ell}$. Note that an operation array $A$ contains the segment $M$
if and only if the encoding of $A$ contains $m_1\cdots m_{\ell}$ as a factor.
Furthermore, the following nondeterministic finite automaton (NFA), $Q_M$, recognizes
the set of strings over $\Sigma$ that contain the encoding of $M$ as a
factor:
\begin{center}
\begin{tikzpicture}[->,>=stealth',shorten >=1pt,auto,node distance=2.4cm,
                    semithick]
    \node[initial,state] (A) {};
    \node[state] (B) [right of=A] {};
    \node (C) [right of=B] {$\scalebox{1.4}{\;\ldots\,}$};
    \node[state] (D) [right of=C] {};
    \node[accepting,state] (E) [right of=D] {};
    \path (A) edge node {$m_1$} (B)
          (B) edge node {$m_2$} (C)
          (C) edge node {$m_{\ell-1}$} (D)
          (D) edge node {$m_{\ell}$} (E)
          (A) edge[loop above] node {$\Sigma$} (A)
          (E) edge[loop above] node {$\Sigma$} (E);
\end{tikzpicture}
\end{center}
Taking the complement of $Q_M$ we get an automaton $\overline{Q_M}$ that
recognizes the set of strings over $\Sigma$ that do not contain the factor
$M$. In particular, if $F$ is a forbidden segment, then $W\cap \overline{Q_F}$
recognizes the set of operation arrays that do not contain the forbidden
segment $F$.

Let $\mathcal{F}$ be the set of bounded forbidden segments, which is finite by
Lemma~\ref{lem:finite_forbidden_segments}. Then the automaton
\[
    S = W\cap \bigcap_{i=2}^k R_i \cap \bigcap_{F\in\mathcal{F}} \overline{Q_F}
\]
recognizes the set of operation arrays that have blocks of size at most $3$ in
rows $2$ through $k$, and do not contain any bounded forbidden segments. Hence, by
Proposition~\ref{prop:operation_array_skeleton_bounded}, the automaton $S$ recognizes exactly
the set of sorting plans. We have the following proposition:
\begin{proposition}\label{prop:skel_regular}
    % The set of encoded sorting plans forms a regular language.
    The language $S = \{\ w\in\Sigma^{*}\ |\ w\text{ is a sorting plan}\ \}$ is regular.
\end{proposition}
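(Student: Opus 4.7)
The plan is to build the regular language $S$ explicitly as a finite Boolean combination of regular languages whose regularity is obvious from the DFAs and NFA already displayed in the text preceding the statement. All the structural work has been done: Proposition~\ref{prop:operation_array_skeleton_bounded} reduces membership in $S$ to two purely local conditions, and Lemma~\ref{lem:finite_forbidden_segments} guarantees that the set of witnesses to the second condition is finite.

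Concretely, I would proceed as follows. First, invoke Proposition~\ref{prop:operation_array_skeleton_bounded} to state that an operation array belongs to $S$ if and only if (i) every block on rows $2,\ldots,k$ has size at most $3$, and (ii) it contains no bounded forbidden segment as a factor. Next, recall that $W$ (as drawn in the preceding paragraph) is a DFA recognizing the operation arrays, i.e.\ strings over $\Sigma$ that begin and end with the symbol $0$. For condition (i), note that each $R_i$ for $2\leq i\leq k$ is a DFA recognizing the strings in which the $i$th row has no block of size greater than $3$; since there are only finitely many rows, the intersection $\bigcap_{i=2}^{k} R_i$ is a finite intersection of regular languages and hence regular. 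For condition (ii), let $\mathcal{F}$ be the set of bounded forbidden segments; by Lemma~\ref{lem:finite_forbidden_segments}, $\mathcal{F}$ is finite. For each $F\in\mathcal{F}$, the NFA $Q_F$ shown above recognizes the strings containing the encoding of $F$ as a factor, so its complement $\overline{Q_F}$ is regular, and the intersection $\bigcap_{F\in\mathcal{F}} \overline{Q_F}$ is again a finite intersection of regular languages.

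The conclusion then follows by combining the pieces: setting
\[
    S \;=\; W\;\cap\;\bigcap_{i=2}^{k} R_i\;\cap\;\bigcap_{F\in\mathcal{F}} \overline{Q_F},
\]
we obtain a finite Boolean combination of regular languages, which is regular by the standard closure properties of the class of regular languages under intersection and complement. By Proposition~\ref{prop:operation_array_skeleton_bounded} this $S$ is exactly the language of sorting plans, establishing the proposition.

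There is no real obstacle here: the only non-trivial ingredient is the finiteness of $\mathcal{F}$, which is Lemma~\ref{lem:finite_forbidden_segments} and in turn rests on the length bound $|T_{a,b}|\leq 4k-5$ from Lemma~\ref{lem:forbidden_segment_bound}. The proof itself is a short assembly step, so I would keep it brief and refer back to the cited lemmas and to the automata already displayed rather than redraw them.
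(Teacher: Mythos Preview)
Your proposal is correct and is essentially the same argument the paper gives: the construction of $S$ as $W\cap\bigcap_{i=2}^{k}R_i\cap\bigcap_{F\in\mathcal{F}}\overline{Q_F}$ and the appeal to Proposition~\ref{prop:operation_array_skeleton_bounded} and Lemma~\ref{lem:finite_forbidden_segments} are exactly what the paper assembles in the paragraphs immediately preceding the proposition. The proposition is stated as a summary of that construction, so your write-up matches it almost verbatim.
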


We can now present our main theorem.
\begin{theorem}
  For a fixed $k$, the generating function
  $P(x) = \sum_{n=0}^{\infty}p_{n} x^{n}$, where $p_{n}$
  is the number of $k$-pop-stack-sortable permutations of length $n$, is
  rational.
\end{theorem}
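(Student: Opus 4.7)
The plan is to deduce the theorem as a short corollary of Proposition~\ref{prop:skel_regular} together with the standard fact that the length generating function of a regular language is rational. The key observation already noted earlier in the paper is that $k$-pop-stack-sortable permutations of $[n]$ are in bijection with traces of length $n$ and order $k$, which in turn are in bijection with their sorting plans; therefore $p_n$ equals the number of sorting plans of length $n$ and order $k$.

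Next I would translate this count into a count of strings in the language $S$. By the encoding $\psi$ introduced earlier, a sorting plan of length $n$ corresponds to a word of length $n+1$ over the alphabet $\Sigma = \{0,1,\ldots,2^k-1\}$. Writing $s_m = |S \cap \Sigma^m|$, we therefore have $p_n = s_{n+1}$ for all $n \geq 0$.

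Now I would invoke the standard result (a direct consequence of the transfer-matrix method, or equivalently of the Chomsky--Sch\"utzenberger theorem applied to unambiguous regular languages): since $S$ is regular by Proposition~\ref{prop:skel_regular}, its length generating function $F(y) = \sum_{m\geq 0} s_m y^m$ is rational. Explicitly, if $M$ is the transition matrix of a DFA recognising $S$, and $u,v$ are the indicator vectors of the initial and accepting states, then $s_m = u^\top M^m v$, so $F(y) = u^\top (I - yM)^{-1} v$ is a rational function of $y$.

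Finally I would extract $P(x)$ from $F$. Since $p_n = s_{n+1}$, we have
\[
    P(x) \;=\; \sum_{n\geq 0} s_{n+1}\, x^n \;=\; \frac{F(x) - s_0}{x},
\]
which is rational because $F(x)$ is. The only thing that needs any care in carrying this out is the off-by-one between the length of a permutation and the length of its encoding; everything else is immediate from the earlier machinery. There is no substantive obstacle here, as the whole combinatorial difficulty has already been absorbed into the construction of the automaton $S$ from forbidden segments in the previous section.
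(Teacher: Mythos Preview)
Your proposal is correct and follows essentially the same route as the paper: invoke the bijection between $k$-pop-stack-sortable permutations of length $n$ and sorting plans (hence words of length $n+1$ in $S$), apply Proposition~\ref{prop:skel_regular}, and use the rationality of regular-language generating functions to conclude. The paper writes $P(x)=S(x)/x$ directly, which agrees with your $(F(x)-s_0)/x$ since $s_0=0$ (the automaton $W$ rejects the empty word); otherwise the arguments are identical.
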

\begin{proof}
    We have a bijection between the $k$-pop-stack-sortable permutations of
    length $n$ and the sorting plans of order $k$ and length $n$, so the two sets
    are equinumerous. The sorting plans of length $n$ are in bijection with words
    of length $n+1$ recognized by the automaton $S$, which is regular by
    Proposition~\ref{prop:skel_regular}. It is well known that regular languages have
    rational generating functions~\cite{schutzenberger1961definition}, and
    they can be derived from the corresponding DFA by setting up a system of
    linear equations. If $S(x)$ is the rational generating function for
    $S$, it is clear that $P(x) = S(x)/x$ and that this generating function
    is rational.
\end{proof}

Since all of the above results are constructive, it is possible to derive the
generating function for any fixed $k$. Doing so by hand is, however, impractical
for almost all values of $k$, so we implemented the above constructions in the
programming languages C++ and Python; the source is on
GitHub~\cite{popstacksgithub}. This way the generating function can be
derived by a computer.  Without going into detail, the outline of the
mechanized procedure, for a fixed $k$, was as follows:
\begin{enumerate}
    \item A smart variant of the procedure given in
        Lemma~\ref{lem:finite_forbidden_segments} was used to generate a
        compact representation of all the bounded forbidden segments. This was achieved
        by grouping together similar segments, and using
        ``wildcards'' to represent a position that could either be a bar or
        not. This was done to battle the exponential blow up in the number of
        segments.
    \item For each group of bounded forbidden segments, an NFA recognizing the
        operation arrays containing one or more of the segments from the group
        was constructed. Each NFA was then turned into a DFA using the classic
        subset construction, complemented, and then minimized using an
        algorithm of Valmari~\cite{valmari2012fast}.
    \item Running a MapReduce-like~\cite{dean2008mapreduce} procedure on a
        cluster~\cite{garpur}, these DFAs were intersected, two at a
        time. To keep the size of the intermediate DFAs down, they
        were also minimized.
    \item A system of linear equations was derived from the final DFA, and the
        system was solved to get the desired generating function.
\end{enumerate}

\begin{table}
    \centering
    \newcolumntype{L}{>{\centering\arraybackslash}m{0.91\textwidth}}
    % \newcolumntype{L}{>{\centering\arraybackslash}m{1.8\textwidth}}
    \begin{tabular}{cL}
        $k$ & Generating function \\[1em]

        $1$ & $(x - 1)/(2x - 1)$ \\[1em]
        $2$ & $(x^{3} + x^{2} + x - 1)/(2x^{3} + x^{2} + 2x - 1)$ \\[1em]
        $3$ & $(2x^{10} + 4x^{9} + 2x^{8} + 5x^{7} + 11x^{6} + 8x^{5} + 6x^{4} + 6x^{3} + 2x^{2} + x - 1)/(4x^{10} + 8x^{9} + 4x^{8} + 10x^{7} + 22x^{6} + 16x^{5} + 8x^{4} + 6x^{3} + 2x^{2} + 2x - 1)$ \\[1em]
        $4$ & $(64x^{25} + 448x^{24} + 1184x^{23} + 1784x^{22} + 2028x^{21} + 1948x^{20} + 1080x^{19} + 104x^{18} - 180x^{17} + 540x^{16} + 1156x^{15} + 696x^{14} + 252x^{13} + 238x^{12} + 188x^{11} + 502x^{10} + 806x^{9} + 544x^{8} + 263x^{7} + 185x^{6} + 99x^{5} + 33x^{4} + 13x^{3} + 3x^{2} + x - 1)/(128x^{25} + 896x^{24} + 2368x^{23} + 3568x^{22} + 3928x^{21} + 3064x^{20} + 176x^{19} - 2304x^{18} - 2664x^{17} - 1580x^{16} - 352x^{15} - 576x^{14} - 1104x^{13} - 760x^{12} - 138x^{11} + 686x^{10} + 1238x^{9} + 869x^{8} + 382x^{7} + 210x^{6} + 102x^{5} + 27x^{4} + 12x^{3} + 3x^{2} + 2x - 1)$
    \end{tabular}
    \caption{The generating functions for the $k$-pop-stack-sortable permutations, $k\leq 4$}
    \label{tbl:generating_functions}
\end{table}
We did so for $k=1,\ldots,6$ and in
Table~\ref{tbl:generating_functions} we list the resulting generating
functions, except for $k=5$ and $k=6$ whose expressions are too large to
display.  For each of them the degree of the polynomial in the numerator
is the same as the degree of the polynomial in the denominator.
Those degrees, the growth rates of coefficents of the generating functions,
and the corresponding sequences for the
number of vertices and edges in the final DFAs can be found in the table
below.
$$
\begin{array}{c|cccccc}
  k & 1 & 2 & 3 & 4 & 5 & 6 \\\hline
  \text{degree} &1&3&10&25&71&213 \\
  \text{growth rate} & 2.0000 & 2.6590 & 3.4465 & 4.2706 & 5.1166 & 5.9669 \\
  \text{vertices}&4&5&12&32&99&339 \\
  \text{edges}&8&11&34&120&477&2010
\end{array}
$$
All the generating functions, source code, and text files defining the
DFAs can be found on GitHub~\cite{popstacksgithub}. The growth rate of
the coefficients of a rational power series $p(x)/q(x)$ is given by
$\max\{1/|\zeta|: q(\zeta)=0\}$ and Sage~\cite{sage} code for
calculating the approximate growth rates, in the table above, can be found
on the same GitHub page.

While we do not think it would be particularly
interesting to compute generating functions for higher $k$ using our
algorithm, it would be interesting to find a closed formula for the
generating functions, possibly leading to results about the
distribution of the number of passes needed to sort a permutation using
a pop-stack. It is not clear whether our approach can be used as a basis
for such a formula.

When deriving the generating functions, we observed that, during the
phase when the DFAs are intersected, the intermediate DFAs were quite
large, often consisting of millions of states. As the final DFAs are so
small, this may indicate that our approach is not a natural one, and
that simpler, more direct approaches exist. And with a simpler approach
it may be easier to find a closed formula.
Finally, as we only considered their enumeration, finding a useful
permutation pattern characterization of the $k$-pop-stack-sortable
permutations remains open.

% \printbibliography
\bibliographystyle{plain}
\bibliography{references}

\begin{thebibliography}{10}

\bibitem{albert2015permutations}
Michael Albert and Mireille Bousquet-M{\'e}lou.
\newblock Permutations sortable by two stacks in parallel and quarter plane
  walks.
\newblock {\em European Journal of Combinatorics}, 43:131--164, 2015.

\bibitem{albert2005insertion}
Michael~H Albert, Steve Linton, and Nik Ru{\v{s}}kuc.
\newblock The insertion encoding of permutations.
\newblock {\em The Electronic Journal of Combinatorics}, 12(1):R47, 2005.

\bibitem{atkinson1999pop}
Mike~D Atkinson and Jörg-Rüdiger Sack.
\newblock Pop-stacks in parallel.
\newblock {\em Information Processing Letters}, 70(2):63--67, 1999.

\bibitem{atkinson2002restricted}
Mike~D Atkinson and Timothy Stitt.
\newblock Restricted permutations and the wreath product.
\newblock {\em Discrete Mathematics}, 259(1-3):19--36, 2002.

\bibitem{avis1981pop}
David Avis and Monroe Newborn.
\newblock On pop-stacks in series.
\newblock {\em Utilitas Math}, 19(129-140):410, 1981.

\bibitem{popstacksgithub}
Anders Claesson and Bjarki~Ágúst Guðmundsson.
\newblock Enumerating the k-pop-stack-sortable permutations.
\newblock \url{https://github.com/SuprDewd/popstacks}.

\bibitem{dean2008mapreduce}
Jeffrey Dean and Sanjay Ghemawat.
\newblock Mapreduce: simplified data processing on large clusters.
\newblock {\em Communications of the ACM}, 51(1):107--113, 2008.

\bibitem{sage}
The~Sage Developers.
\newblock {\em {S}ageMath, the {S}age {M}athematics {S}oftware {S}ystem
  ({V}ersion 7.6)}, 2017.
\newblock {\tt http://www.sagemath.org}.

\bibitem{garpur}
Garpur cluster.
\newblock IHPC - Icelandic High Performance Computer - University of Iceland
  and Reykjavik University, 2017.

\bibitem{knuth1968art}
Donald~E Knuth.
\newblock {\em The Art of Computer Programming, Volume 1: Fundamental
  Algorithms}.
\newblock Addison-Wesley, 1968.

\bibitem{pudwell2017}
Lara Pudwell and Rebecca Smith.
\newblock Two-stack-sorting with pop stacks.
\newblock \url{https://arxiv.org/abs/1801.05005}.
\newblock Preprint.

\bibitem{schutzenberger1961definition}
Marcel~Paul Sch{\"u}tzenberger.
\newblock On the definition of a family of automata.
\newblock {\em Information and control}, 4(2-3):245--270, 1961.

\bibitem{smith2009enumeration}
Rebecca Smith and Vincent Vatter.
\newblock The enumeration of permutations sortable by pop stacks in parallel.
\newblock {\em Information Processing Letters}, 109(12):626--629, 2009.

\bibitem{tarjan1972sorting}
Robert Tarjan.
\newblock Sorting using networks of queues and stacks.
\newblock {\em Journal of the ACM (JACM)}, 19(2):341--346, 1972.

\bibitem{ulfarsson2012describing}
Henning {\'U}lfarsson.
\newblock Describing {W}est-3-stack-sortable permutations with permutation
  patterns.
\newblock {\em S{\'e}minaire Lotharingien de Combinatoire}, 67:B67d, 2012.

\bibitem{valmari2012fast}
Antti Valmari.
\newblock Fast brief practical {DFA} minimization.
\newblock {\em Information Processing Letters}, 112(6):213--217, 2012.

\bibitem{west1990permutations}
Julian West.
\newblock {\em Permutations with forbidden subsequences, and, stack-sortable
  permutations}.
\newblock PhD thesis, Massachusetts Institute of Technology, 1990.

\bibitem{zeilberger1992proof}
Doron Zeilberger.
\newblock A proof of {J}ulian {W}est's conjecture that the number of
  two-stacksortable permutations of length $n$ is $2 (3n)!/((n+ 1)!(2n+ 1)!)$.
\newblock {\em Discrete Mathematics}, 102(1):85--93, 1992.

\end{thebibliography}

\end{document}